
\documentclass[letterpaper,11pt]{amsart}


\usepackage[margin=1.2in]{geometry}
\usepackage{amsmath,amsthm,amssymb}
\usepackage{xspace,xcolor}
\usepackage[
  breaklinks,colorlinks,
  citecolor=teal,linkcolor=teal,urlcolor=teal
]{hyperref}
\usepackage[alphabetic]{amsrefs}
\usepackage[all]{xy}
\usepackage{booktabs}




\theoremstyle{plain}
\newtheorem{thm}{Theorem}[section]

\newtheorem{lem}[thm]{Lemma}
\newtheorem{prop}[thm]{Proposition}
\newtheorem{cor}[thm]{Corollary}

\theoremstyle{definition}

\newtheorem{eg}[thm]{Example}

\newtheorem{question}[thm]{Question}

\theoremstyle{remark}
\newtheorem{rmk}[thm]{Remark}

\numberwithin{equation}{section}


\def\N{{\mathbb N}}

\def\C{{\mathbb C}}

\def\P{{\mathbb P}}

\def\cL{\mathcal{L}}

\def\cN{\mathcal{N}}

\def\I{\mathcal{I}}

\def\O{\mathcal{O}}

\def\f{\phi}
\def\ff{\psi}
\def\e{\eta}

\def\n{\nu}

\def\Om{\Omega}

\def\.{\cdot}
\def\^{\widehat}
\def\~{\widetilde}
\def\o{\circ}
\def\ov{\overline}

\def\rat{\dashrightarrow}

\def\({\left(}
\def\){\right)}

\renewcommand{\and}{ \ \ \text{ and } \ \ }

\def\reg{\mathrm{reg}}

\DeclareMathOperator{\Bl} {Bl}

\DeclareMathOperator{\Hilb} {Hilb}

\DeclareMathOperator{\Rat} {Rat}
\DeclareMathOperator{\SRC} {SRC}



\begin{document}



\title     {Rationality in families of threefolds}


\author    {Tommaso de Fernex}
\author    {Davide Fusi}
\address   {Department of Mathematics,
            University of Utah,
            155 South 1400 East,
            Salt Lake City, UT 84112, USA} 
\email     {defernex@math.utah.edu}
\email     {fusi@math.utah.edu}

\thanks{2010 \emph{Mathematics Subject Classification.}
         Primary 14E08; Secondary 14M20, 14M22, 14D06}

\thanks{\emph{Key words and phrases.}
         Rational varieties, separably rational connectedness}

\thanks    {The first author is partially supported by NSF CAREER Grant DMS-0847059.}

\thanks    {Compiled on \today. Filename \small\tt\jobname}


\begin{abstract}
We prove that in a family of projective threefolds defined over
an algebraically closed field,
the locus of rational fibers is a countable union of closed subsets
of the locus of separably rationally connected fibers.
When the ground field has characteristic zero, this implies that
the locus of rational fibers in a smooth family of projective threefolds
is the union of at most countably many closed subfamilies.
\end{abstract}

\maketitle

\section{Introduction}

Let $f\colon X \to T$ be a projective equidimensional 
morphism onto a connected reduced scheme $T$ of finite type
over an algebraically closed field $k$. Assume that the fibers $X_t := f^{-1}(t)$
are varieties for all $t \in T$, and
let $n$ denote the relative dimension of $f$.
We will refer to $f$ as a family of projective varieties. 
We are interested in understanding the algebraic 
structure of the \emph{rational locus}
\[
\Rat(f) := \{\, t \in T \mid \text{$X_t$ is a rational variety}\,\}
\]
of the family.

It follows by general facts that 
$\Rat(f)$ is a countable union of locally closed subsets of $T$. 
Once singularities are allowed, it is easy to pick up 
examples of families of rational varieties that specialize to nonrational ones. 
In characteristic zero, however, the following question 
regarding smooth families has been around for some time.

\begin{question}
\label{conj}
Assuming that $f \colon X \to T$ is a smooth family of projective varieties
over an algebraically closed field of characteristic zero, 
is $\Rat(f)$ equal to a countable union of closed subsets of $T$?
\end{question}

The answer is easy and well-known in dimension one, and follows from
Castelnuovo's rationality criterion in dimension two. 
It is expected that in higher dimensions $\Rat(f)$ can be a proper subset, 
possibly with infinitely many components; 
this should occur for instance in smooth families of cubic fourfolds.

In this paper we study the three-dimensional case. 
We do not put conditions on the characteristic of the ground field $k$;
for this reason we consider the \emph{separably rationally connected locus}
\[
\SRC(f) := \{\, t \in T \mid \text{$X_t$ is separably rationally connected}\,\}
\]
of the family.
There is an inclusion $\Rat(f) \subset \SRC(f)$, and equality holds for 
projective families of relative dimension $n \le 2$. We prove the following result.

\begin{thm}
\label{t:main}
For every family $f\colon X \to T$ of projective 
varieties of dimension three over an algebraically closed field,
$\Rat(f)$ is a countable union of closed subsets of $\SRC(f)$. 
\end{thm}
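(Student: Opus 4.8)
The plan is to reduce Theorem~\ref{t:main} to a one-parameter specialization question and then settle that by relative birational geometry in dimension three. Graphs of birational maps $\P^3\dashrightarrow X_t$ are parametrized, component by component, by countably many quasi-projective $T$-schemes; taking images in $T$ one writes $\Rat(f)=\bigcup_i T_i$ with each $T_i$ locally closed. It therefore suffices to show $\overline{T_i}\cap\SRC(f)\subseteq\Rat(f)$ for every $i$. Fix $t_0\in\overline{T_i}\cap\SRC(f)$, pick an integral curve $C\subseteq\overline{T_i}$ through $t_0$ meeting $T_i$, and after replacing $C$ by a finite cover, normalizing, and localizing at a point over $t_0$, obtain a DVR $R$ essentially of finite type over $k$, with fraction field $K$ and residue field $\kappa=\bar k$, together with the pullback family $\mathcal X\to\Spec R$ whose generic fibre $X_K$ is rational over $K$ and whose closed fibre $X_\kappa$ is separably rationally connected. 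The theorem reduces to the assertion that such an $X_\kappa$ is rational.

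\textbf{A good model and the relative MMP.} Replacing $\mathcal X$ by a suitable model (via resolution in low dimensions, or a terminalization, and if needed a further finite base change on $R$ to reach a mild degeneration), one arranges that $\mathcal X$ is $\Q$-factorial, terminal, projective over $R$, with closed fibre still birational to $X_\kappa$ — hence still separably rationally connected — and generic fibre still rational. Since $X_K$ is uniruled, $K_{\mathcal X}$ is not pseudo-effective over $R$, so the relative minimal model program over $\Spec R$ terminates in a Mori fibre space $\mathcal X'\to\mathcal Z'$ over $R$. A point that must be checked carefully is that the program does not destroy the birational class of the closed fibre: steps with center in the generic fibre leave $X_\kappa$ untouched, while a flip with center in the closed fibre alters it only in codimension $\ge 2$, so with the chosen model $X'_\kappa$ remains a normal threefold birational to $X_\kappa$, and $X'_K$ stays rational.

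\textbf{Case analysis on the Mori fibre space.} Restricting over the closed point gives a Mori fibre structure $X'_\kappa\to Z'_\kappa$ on a threefold birational to $X_\kappa$, hence separably rationally connected. If $\dim_R\mathcal Z'=2$, then $\mathcal X'\to\mathcal Z'$ is a conic bundle; $Z'_K$ is dominated by the rational threefold $X'_K$, hence is a (separably unirational, so) rational surface, and a flat-family argument gives that $Z'_\kappa$ is rational as well; one then has to show that a separably rationally connected conic bundle over a rational surface which degenerates a rational conic bundle is rational, using that the discriminant and the Brauer-type obstruction are controlled along the family. If $\dim_R\mathcal Z'=1$, then $Z'_K\cong\P^1_K$ (being dominated by $X'_K$ and acquiring a rational point from it), likewise $Z'_\kappa\cong\P^1$, and $\mathcal X'\to\mathcal Z'$ is a del Pezzo fibration to be analyzed the same way. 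If $\dim_R\mathcal Z'=0$, then both fibres of $\mathcal X'\to\Spec R$ are $\Q$-factorial terminal $\Q$-Fano threefolds of Picard number one, the generic one rational.

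\textbf{Main obstacle.} The substance of the argument is the case $\dim\mathcal Z'=0$, and to a lesser extent the del Pezzo fibration case: one must prove that a $\Q$-factorial terminal $\Q$-Fano threefold of Picard number one that is separably rationally connected and occurs as the closed fibre of a family with rational (Fano) generic fibre is itself rational. I expect this to rest on boundedness of terminal $\Q$-Fano threefolds (to reduce to finitely many deformation families), constancy along the family of the discrete invariants such as $(-K)^3$ and, after resolving, the relevant \'etale or Hodge data, semicontinuity of the finer rationality obstructions (intermediate Jacobian, torsion classes, unramified cohomology), and the classification of rational Fano threefolds, which together should match each such degeneration with a rational model. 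Carrying the closed fibre cleanly through the relative MMP, and making the whole analysis uniform across characteristics, is where the real difficulty lies.
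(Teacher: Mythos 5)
Your reduction to a one-parameter degeneration over a DVR (via the countable decomposition $\Rat(f)=\bigcup_i T_i$ coming from Hilbert schemes of graphs, and a curve in $\ov{T_i}$ through a point of $\SRC(f)$) is exactly the reduction the paper performs. But the heart of the matter --- showing that the separably rationally connected closed fibre of such a degeneration is rational --- is not proved in your proposal, and the route you sketch for it has genuine gaps. First, the relative MMP over $\Spec R$ for threefolds is not available over an arbitrary algebraically closed field (the theorem makes no assumption on the characteristic), so the very first step of your case analysis is not justified in the stated generality. Second, even where the MMP runs, your claim that its steps preserve the birational class of the closed fibre is false: the closed fibre is a divisor on $\cX$, and a divisorial contraction of the relative MMP can contract a component of it outright; flips centered in the closed fibre need not be isomorphisms in codimension one \emph{on that fibre}. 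Third, and most importantly, you explicitly leave the del Pezzo fibration, conic bundle, and $\dim\cZ'=0$ cases as conjectural, to be handled by ``boundedness, semicontinuity of rationality obstructions, and classification.'' That final case --- a terminal $\Q$-Fano threefold of Picard number one, separably rationally connected, specializing a rational Fano --- is essentially the full strength of the specialization-of-rationality problem in dimension three; no argument via intermediate Jacobians or unramified cohomology is known to settle it (these are obstructions to rationality, not criteria for it), so the proposal does not close.

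The paper's proof of the key one-parameter statement (Theorem~\ref{t:closed}) avoids the MMP entirely and is worth internalizing. One spreads out the generic birational maps to a single birational map $\f\colon X'\rat T'\times\P^3$ over the base $T'$ (using properness of the Hilbert scheme to extend the family of graphs over the special point). The chosen reduced component $D'$ of the closed fibre is a Cartier divisor at its generic point, hence defines a divisorial valuation $\n$ on $K(T'\times\P^3)$. By Zariski's theorem (\cite[Lemma~2.45]{KM98}), blowing up the successive centers $C_0, C_1,\dots$ of $\n$ reaches, after finitely many steps $m$, a model $Y_m$ on which the center $C_m$ is a divisor birational to $D'$. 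If $m=0$ the map $\f$ already restricts to a birational map $D'\rat\{0'\}\times\P^3$. If $m\ge 1$, then $C_m$ contains a dense open subset isomorphic to the projectivization of the normal bundle of (an open part of) $C_{m-1}$, and $C_{m-1}$ has dimension at most two and is separably rationally connected (being dominated by the SRC threefold $C_m$ via a generically smooth map), hence is rational by the Castelnuovo criterion (Proposition~\ref{p:SRC-dim2}); therefore $C_m$, and so $D'$, is rational. This is where the hypothesis $n=3$ enters --- the base of the projective bundle has dimension at most two, the range where SRC implies rational --- and it replaces your entire case analysis with one elementary dichotomy.
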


In the case where the ground field has characteristic zero, 
this yields a positive answer to Question~\ref{conj} in dimension three.

\begin{cor}
\label{c:conj3}
For a smooth family $f \colon X \to T$ of projective threefolds 
over an algebraically closed field of characteristic zero, 
$\Rat(f)$ is a countable union of closed subsets of $T$.
\end{cor}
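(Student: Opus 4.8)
The plan is to derive Corollary~\ref{c:conj3} from Theorem~\ref{t:main} by showing that, under the two extra hypotheses of the corollary --- that $k$ has characteristic zero and that $f$ is smooth --- the locus $\SRC(f)$ is already a closed subset of $T$. Granting this the corollary is immediate: a closed subset of $\SRC(f)$ is then a closed subset of $T$, and Theorem~\ref{t:main} asserts that $\Rat(f)$ is a countable union of closed subsets of $\SRC(f)$.

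First I would identify $\SRC(f)$ with the rationally connected locus of the family. As $f$ is smooth, every fiber $X_t$ is a smooth projective variety, and since $k$ has characteristic zero, for such varieties separable rational connectedness coincides with rational connectedness: a smooth projective rationally connected variety over an algebraically closed field of characteristic zero carries a very free rational curve, by the work of Koll\'ar--Miyaoka--Mori. Hence $\SRC(f) = \{\, t \in T \mid \text{$X_t$ is rationally connected}\,\}$, and it remains to prove this set is closed in $T$. For that I would invoke the theorem of Koll\'ar--Miyaoka--Mori that rational connectedness is a deformation-invariant property of the fibers of a smooth proper family over a field of characteristic zero. Concretely, the rationally connected locus is open --- small deformations of a rationally connected fiber stay rationally connected --- and it is also stable under specialization, because for smooth proper varieties in characteristic zero rational connectedness agrees with rational chain connectedness, and rational chain connectedness is preserved under specialization in any proper family (one takes limits of chains of rational curves in the relative Chow variety). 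An open subset of the Noetherian scheme $T$ that is stable under specialization is closed; since moreover $T$ is connected, the rationally connected locus is in fact either empty or all of $T$. In every case $\SRC(f)$ is closed in $T$, and combining this with Theorem~\ref{t:main} completes the proof.

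I expect the only genuinely non-formal ingredient to be the closedness of the rationally connected locus, and this is exactly where characteristic zero and the smoothness of $f$ enter: in positive characteristic, or in the presence of singular fibers, separable rational connectedness need not define a closed --- or even constructible --- subset of the base, which is precisely why Theorem~\ref{t:main} is phrased with $\SRC(f)$, rather than $T$, as the ambient scheme. The remaining manipulations present no difficulty: they amount to the observation that a closed subset of a closed subset is closed, applied to the countable family of closed subsets of $\SRC(f)$ produced by Theorem~\ref{t:main}.
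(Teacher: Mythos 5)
Your proposal is correct and follows essentially the same route as the paper: the paper likewise reduces the corollary to Theorem~\ref{t:main} by observing that $\SRC(f)$ is open and closed (hence empty or all of $T$) for a smooth family in characteristic zero, citing Proposition~\ref{p:SRC}(c) (i.e.\ Koll\'ar's Theorem~IV.3.11) where you sketch the underlying Koll\'ar--Miyaoka--Mori argument directly. The only cosmetic difference is that the paper first splits off the trivial case $\Rat(f)=\emptyset$ before concluding $\SRC(f)=T$.
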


The proof of these results is based on two basic properties: the countability
and properness of the irreducible components of Hilbert schemes, and 
the fact that divisorial valuations are geometric. In characteristic zero, 
one can alternatively use the Weak Factorization Theorem in place
of the property on valuations. A key step of the proof is a
result regarding one-parameter degenerations of rational varieties, 
which is stated and proven below in Theorem~\ref{t:closed}.
A special case of this result, where the ground field is $k = \C$
and the degeneration is given by a smooth family of complex threefolds, 
was also obtained using Hodge theoretic methods by Timmerscheidt \cite{Tim}.

In view of Theorem~\ref{t:main}, one could ask, as a 
plausible extension of Question~\ref{conj} to arbitrary 
characteristics, whether given any family $f\colon X \to T$ of projective 
varieties over an algebraically closed field, 
$\Rat(f)$ is always a countable union of closed subsets of $\SRC(f)$.
The examples discussed below in Example~\ref{ex:not-closed} 
suggest however that this may be false in higher dimensions, even in characteristic zero;
in fact, in view of these examples, it seems likely
that the hypothesis on the dimension in the theorem
is optimal. 
It is important to remark that the families 
in these examples are not smooth, so they do not bring enough
evidence to disbelieve Question~\ref{conj}. Rather, they suggest that if 
a positive answer is expected to this question, then the smoothness of the family should
play a key role in the proof. 

We do not know the full history of Question~\ref{conj}.
It is an old problem to find an example of a family of nonrational projective varieties
that specializes to a smooth rational variety.
The question whether in smooth families 
the locus of rational fibers can always be expressed as a countable union of closed
subsets has been considered in conversations between Paolo Francia
and Alessandro Verra; it is likely that the same question has been 
raised by other mathematicians as well. 
The special case of cubic hypersurfaces in $\P^5$ is representative:
rationality questions about cubic fourfolds have attracted 
the attention of the mathematical community for a long time,
starting with the work of Ugo Morin \cite{Mor40} and Gino Fano \cite{Fan44} if not earlier.
The construction of countably many families of rational cubic fourfolds
due to Brendan Hassett \cite{Has99,Has00}, in particular, fits naturally in the context  of 
Question~\ref{conj} and has prompted more people to consider 
the rationality problem from this point of view.

\subsection*{Acknowledgment}
The first author would like to express his gratitude to Paolo Francia
who first got him interested in Question~\ref{conj};
we dedicate this paper to his memory.
We would like to thank Emanuele Macr\`i, Massimiliano Mella,  
and Alessandro Verra for valuable comments, and Claire Voisin for
explaining to us the argument of the proof of Proposition~\ref{p:loc-closed}
given below. We thank J\'anos Koll\'ar from bringing
the paper \cite{Tim} to our attention, and the referees for useful comments.

\section{General properties}

We work over an algebraically closed field $k$.
All schemes are assumed to be of finite type over $k$.
With the term \emph{variety} we mean an integral scheme.
A morphism of varieties $f \colon X \to Y$ is \emph{separable} 
if it is dominant and the field extension $K(X) \supset K(Y)$ is separably generated.
A variety $X$ is \emph{rational} if its function field is purely transcendental over $k$, 
or, equivalently, if $X$ is birational to $\P^n$
where $n = \dim X$. A variety $X$ is \emph{separably rationally connected}
if there is a variety $V$ and a morphism $u \colon \P^1 \times V \to X$
such that 
\[
u^{(2)} \colon \P^1 \times \P^1 \times V \to X \times X
\]
is separable, or equivalently, is dominant and smooth at the generic point 
(cf. \cite[Definition~IV.3.2]{Kol96}).

The following property is a 
direct consequence of the definition (cf.~\cite[Proposition~IV.3.3.1]{Kol96}).

\begin{prop}
\label{p:SRC-bir}
If $X$ and $X'$ are two proper varieties that are birationally equivalent, 
then $X$ is separably rationally connected if and only if $X'$ is.
\end{prop}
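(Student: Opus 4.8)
The plan is to push the family of rational curves witnessing separable rational connectedness of one variety through the birational map to the other, the only delicate point being to control the indeterminacy of the resulting rational map. By symmetry it is enough to prove that if $X$ is separably rationally connected and $g \colon X \rat X'$ is a birational map of proper varieties, then $X'$ is separably rationally connected. Let $V$ and $u \colon \P^1 \times V \to X$ be as in the definition, so that $u^{(2)} \colon \P^1 \times \P^1 \times V \to X \times X$ is dominant and smooth at the generic point; in particular $u$ itself is dominant. Since the ground field is algebraically closed, $V$ is generically smooth, so after replacing $V$ by its smooth locus --- an open dense subvariety over which $u$ restricts to a morphism and $u^{(2)}$ remains dominant and smooth at the generic point --- we may assume that $V$, and hence $\P^1 \times V$, is smooth.

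Consider the rational map $h := g \circ u \colon \P^1 \times V \rat X'$, which is defined on a dense open set (namely the preimage under $u$ of the domain of $g$). Because $\P^1 \times V$ is smooth and $X'$ is proper, the indeterminacy locus $Z$ of $h$ has $\codim Z \ge 2$. Let $p \colon \P^1 \times V \to V$ be the second projection. Then $p(Z)$ is closed, as $p$ is proper, and $\dim p(Z) \le \dim Z \le \dim(\P^1 \times V) - 2 = \dim V - 1$, so $V^{\circ} := V \setminus p(Z)$ is a nonempty open subvariety of $V$. For every $v \in V^{\circ}$ the fiber $\P^1 \times \{v\}$ misses $Z$, hence lies in the domain of $h$; therefore $\P^1 \times V^{\circ}$ is contained in the domain of $h$, and $h$ restricts to a genuine morphism, still denoted $h \colon \P^1 \times V^{\circ} \to X'$.

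Finally I would verify that $h^{(2)} \colon \P^1 \times \P^1 \times V^{\circ} \to X' \times X'$ is separable. Over the dense open locus of $\P^1 \times \P^1 \times V^{\circ}$ where all the relevant maps are defined one has $h^{(2)} = (g \times g) \circ u^{(2)}$. Since $g$ restricts to an isomorphism between dense open subsets of $X$ and $X'$, the map $g \times g$ restricts to an isomorphism between dense open subsets of $X \times X$ and $X' \times X'$; as $u^{(2)}$ is dominant, the generic point of $\P^1 \times \P^1 \times V^{\circ}$ maps into this locus, where $h^{(2)}$ is the composition of $u^{(2)}$ with a local isomorphism. Hence $h^{(2)}$ is dominant and smooth at the generic point, that is, separable, so $X'$ is separably rationally connected. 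Running the argument with the roles of $X$ and $X'$ exchanged gives the converse.

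The step I expect to require the most care is the reduction ensuring $\codim Z \ge 2$: without smoothness of $\P^1 \times V$ the indeterminacy locus of $g \circ u$ could be a divisor meeting a general fiber $\P^1 \times \{v\}$, and then the restriction $h|_{\P^1 \times \{v\}}$ --- although it extends to a morphism $\P^1 \to X'$ because $\P^1$ is a smooth proper curve --- need not glue to a morphism on an open neighborhood of the fiber. Arranging smoothness of $V$ up front, which is legitimate over an algebraically closed field, removes this obstruction cleanly; everything else in the argument is formal.
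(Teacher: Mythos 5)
Your argument is correct. The paper does not prove this proposition itself---it simply cites \cite[Proposition~IV.3.3.1]{Kol96} as a direct consequence of the definition---and your proof is essentially the standard argument behind that reference: pass to the smooth (hence normal) locus of $V$, use the valuative criterion of properness to see that the indeterminacy locus of $g \circ u$ on $\P^1 \times V$ has codimension at least two, and discard the (proper, closed, lower-dimensional) image of that locus in $V$ so that $g\circ u$ becomes an honest morphism whose square is still dominant and smooth at the generic point.
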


It is straightforward from the definitions that a proper rational variety
is separably rationally connected, and the converse holds in dimension two
(cf.~\cite[Exercise~IV.3.3.5]{Kol96}).

\begin{prop}
\label{p:SRC-dim2}
Let $X$ be a proper surface. 
If $X$ is separably rationally connected, then it is rational.
\end{prop}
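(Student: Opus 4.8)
The plan is to reduce the statement to the classical characteristic-zero case by first establishing that a separably rationally connected proper surface $X$ is geometrically ruled over a suitable base, and then invoking Castelnuovo's criterion. First I would replace $X$ by a smooth projective model $\~X$; by Proposition~\ref{p:SRC-bir} this does not affect separable rational connectedness, and by Castelnuovo it suffices to prove $\~X$ is rational. Since $X$ is separably rationally connected, there is a free rational curve through a general point, so $\~X$ is uniruled; running the minimal model program on the smooth projective surface $\~X$ (which works over any algebraically closed field) and using uniruledness, we obtain a birational morphism $\~X \to S$ where $S$ is either $\P^2$ or a $\P^1$-bundle over a smooth projective curve $C$. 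In the first case we are done, so assume $S \to C$ is a $\P^1$-bundle.

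Next I would show $C \cong \P^1$. The composition $\~X \to S \to C$ exhibits $C$ as a target of a dominant morphism from a separably rationally connected variety; more concretely, the image of a free rational curve on $\~X$ gives a nonconstant morphism $\P^1 \to C$, forcing $C$ to have geometric genus zero, hence $C \cong \P^1$. Therefore $S$ is a $\P^1$-bundle over $\P^1$, i.e.\ a Hirzebruch surface, which is rational; pulling back along the birational morphism $\~X \to S$, we conclude $\~X$ is rational, and hence so is $X$.

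The main obstacle I expect is the subtlety of the minimal model program and of the notion ``free rational curve'' in positive characteristic. In characteristic zero one freely uses Mori's bend-and-break and the classification of Fano/ruled surfaces, but here one must be careful: the MMP for surfaces is classical and characteristic-free, but the deduction ``separably rationally connected $\Rightarrow$ there is a very free, in particular free, rational curve'' is exactly the point where separability (as opposed to mere rational connectedness) is needed — this is \cite[Theorem~IV.3.7]{Kol96} or the definition unwound. So the key step, and the one to treat with care, is verifying that the existence of the morphism $u$ in the definition of separable rational connectedness yields a free rational curve on a smooth model, so that uniruledness and the structure of the MMP output can be invoked; once that is in hand, the genus-zero argument for $C$ and Castelnuovo's criterion finish the proof routinely.
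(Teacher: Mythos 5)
Your route (resolve, run the surface MMP, identify the base of the ruling as $\P^1$) is genuinely different from the paper's, but as written it has two gaps, both located exactly in the positive-characteristic territory that makes this proposition nontrivial. First, the step ``using uniruledness, the MMP outputs $\P^2$ or a $\P^1$-bundle over a curve $C$'' is not justified: in positive characteristic uniruledness does \emph{not} imply $\kappa = -\infty$ (Shioda's unirational surfaces of general type, or supersingular K3 surfaces, are uniruled but their minimal models have $K$ nef), and it is only the condition $\kappa(\~X) = -\infty$ that forces the minimal model to be a Mori fiber space. To get $\kappa = -\infty$ you must use the \emph{free} (indeed very free) curve to kill all plurigenera --- a curve $g$ with $\deg g^*T_{\~X} > 0$ forces every effective pluricanonical divisor to contain a dense family of curves, hence to vanish. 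In other words, the vanishing argument you are implicitly relying on is precisely the paper's entire proof; once you have it, you can feed $P_2 = 0$ and $h^0(\Omega^1)=0$ directly into Castelnuovo's criterion (valid in all characteristics by Zariski) and skip the MMP altogether, which is what the paper does after citing \cite[Theorem~IV.3.7]{Kol96} for the very free curve and \cite{Lip78} for resolution of surface singularities.

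Second, in your argument that $C \cong \P^1$ you say the image of a \emph{free} rational curve gives a nonconstant map $\P^1 \to C$. A free curve can perfectly well be contained in a fiber of $\~X \to C$ (a fiber of a ruled surface is itself a free curve, with $g^*T = \cO(2)\oplus\cO$), so this does not follow. You need either a \emph{very free} curve (ampleness of $g^*T_{\~X}$ is incompatible with the trivial quotient coming from the normal bundle of a fiber), or the observation that a nonzero $1$-form on $C$ would pull back to a nonzero element of $H^0(\~X,\Omega^1_{\~X})$, which vanishes for separably rationally connected varieties. Both fixes are available, and you correctly flag \cite[Theorem~IV.3.7]{Kol96} as the place where separability enters; but the proof as stated does not go through, because ``uniruled'' and ``free'' are each one notch too weak for the conclusions you draw from them.
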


\begin{proof}
By \cite{Lip78}, there exists a resolution of singularities of $X$. 
Since both rationality and separably rational connectedness are
birational properties, we may thus assume without loss of generality that $X$ is smooth. 
Then, by \cite[Theorem~IV.3.7]{Kol96}, there is a morphism $g \colon \P^1 \to X$ such that
$f^*T_X$ is ample. This implies that every section of $(\wedge^q\Om_X)^{\otimes m}$, 
for any $q,m \ge 1$, vanishes along $g(\P^1)$. As these curves cover a dense set in $X$, 
we conclude that all sections of $(\wedge^q\Om_X)^{\otimes m}$ are zero. 
Therefore $X$ is rational by Castelnuovo's criterion.
\end{proof}

Suppose now that $f\colon X \to T$
is a family of projective varieties parameterized by a connected, reduced scheme $T$
of finite type over $k$.
The rational locus $\Rat(f)$ of the family has the following algebraic structure. 

\begin{prop}
\label{p:loc-closed}
$\Rat(f)$ is a countable union of locally closed subsets of $T$.
\end{prop}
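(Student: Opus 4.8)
The plan is to realize rationality of a fiber as a countable union of constructible conditions by parametrizing birational maps to $\P^n$ using relative Hilbert schemes. First I would note that a variety $X_t$ of dimension $n$ is rational if and only if there exists a birational map $X_t \dashrightarrow \P^n$, and any such map has a graph $\Gamma_t \subset X_t \times \P^n$ whose closure is an $n$-dimensional subvariety projecting birationally onto both factors. The idea is therefore to work with the relative Hilbert scheme $\Hilb(X \times_T \P^n_T / T)$, which by the fundamental properties of Hilbert schemes has countably many irreducible components, each proper over $T$ (after taking the component structure into account, via the decomposition by Hilbert polynomial). Over each such component $H$, there is a universal family $\cZ \subset X \times_T \P^n_T$ flat over $H$, and one gets induced morphisms $\cZ \to X$ and $\cZ \to \P^n_T$ over $T$.

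Next I would identify, inside each component $H$, the locus $H^\circ$ of points $[Z]$ such that $Z \to X_t$ and $Z \to \P^n$ (the two projections of the fiber) are both birational morphisms onto their images with the images being the full $X_t$ and $\P^n$ respectively. The key point is that this is a \emph{constructible} condition on $H$: the dimension of fibers of a morphism is upper semicontinuous, the locus where a projective morphism is an isomorphism over a dense open set is constructible, and surjectivity onto a fiber of a proper family is a closed condition; combining these via Chevalley's theorem shows $H^\circ$ is constructible in $H$. Then $\Rat(f)$ is exactly the union, over the countably many components $H$, of the images of $H^\circ$ under the (proper, hence closed, but in any case finite-type) structure morphism $H \to T$; by Chevalley's theorem each such image is constructible in $T$. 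A constructible set is a finite union of locally closed sets, so the total is a countable union of locally closed subsets of $T$, as desired.

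The main obstacle, and the step that needs care, is the constructibility of the "birational onto $\P^n$ and onto $X_t$" locus inside $H$: one must check that the conditions defining a \emph{birational} map (as opposed to merely a dominant generically finite one, or a map of the correct dimension) cut out a constructible set. This is where I would invoke the generic flatness and the behavior of fiber dimension and degree in families: the locus in $H$ where the generic fiber of $\cZ_H \to \P^n_H$ is a single reduced point, and likewise where $\cZ_H \to X$ has image meeting $X_t$ in all of $X_t$ with degree one, is constructible because degree and generic fiber dimension are constructible invariants in flat families. I learned the precise form of this argument, for which credit is due to Claire Voisin, as acknowledged above; the upshot is that after stratifying $T$ (and $H$) finitely, the rationality condition becomes visibly locally closed on each stratum, yielding the countable union of locally closed subsets.
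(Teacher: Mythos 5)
Your proposal follows essentially the same route as the paper: parametrize graphs of birational maps $X_t \rat \P^n$ by the relative Hilbert scheme $\Hilb(X \times_T (T\times\P^n)/T)$, use its countably many irreducible components, show the locus where both projections of the universal fiber are birational is constructible (the paper isolates this as Lemma~\ref{l:constr}, proved by Noetherian induction from the generic point, much as you sketch), and push forward via Chevalley's theorem. The argument is correct; the only minor point is that you should also explicitly impose irreducibility of the fiber $U_h$ when defining the constructible locus, as the paper does.
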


We learned the following proof, which simplifies our original arguments, 
from Claire Voisin.

\begin{proof} 
Let $P := T \times \P^n$, where $n$ is the relative dimension of $f$.
First observe that every closed subscheme $Z \subset X \times_T P$
determines a birational map $X_t \rat P_t \cong \P^n$ for every $t$
such that $Z_t$ is irreducible and 
both projections $Z_t \to X_t$ and $Z_t \to P_t$ are birational;
conversely, all birational maps from fibers of $f$ to $\P^n$
arise in this way. 

Consider the relative Hilbert scheme $H := \Hilb(X\times_T P/T)$
of $X \times_T P$, and let $U \to H$ be the universal family:
$U$ is a closed subscheme of $X \times_T P \times_T H$, flat over $H$.
For every irreducible component $H_j$ of $H$, consider the set
of points $h \in H_j$ such that $U_h$ is irreducible and, if $t \in T$ is the image
of $h$, then the projections
$U_h \to X_t$ and $U_h \to P_t$ are birational. By applying Lemma~\ref{l:constr}
to $U_j \to X \times_T H_j \to H_j$ and $U_j \to P \times_T H_j \to H_j$
where $U_j := U \times_H H_j$, we see that
this set is constructible in $H_j$. By Chevalley's theorem, its image in $T$ is 
also constructible, and as such
can be written as a finite union of locally closed subsets.
The union of all these sets, as $H_j$ varies among the irreducible components
of the Hilbert scheme, is $\Rat(f)$. The statement then follows by the fact that
the Hilbert scheme has countably many irreducible components. 
\end{proof}

\begin{lem}
\label{l:constr}
Let $U \to V \to H$ be morphisms of schemes of finite type over $k$, with $U \to H$ flat and 
$V \to H$ projective. Then the set $h \in H$ such that
$U_h$ is irreducible and $U_h \to V_h$ is birational is constructibe. 
\end{lem}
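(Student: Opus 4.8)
The plan is to reduce the statement to standard constructibility results in EGA IV by stratifying $H$. First I would treat irreducibility of the fibers $U_h$: since $U \to H$ is of finite type, the function $h \mapsto (\text{number of irreducible components of } U_h \text{ of maximal dimension})$ and, more to the point, the locus where $U_h$ is geometrically irreducible is constructible in $H$ by \cite[EGA~IV, 9.7.7]{EGA4} (here one should pass to geometric fibers, or first reduce to the case $k$ algebraically closed, which is our standing hypothesis). Intersecting with the locus where $U_h$ is pure of some fixed dimension, also constructible, lets me assume after passing to a locally closed stratum of $H$ that $U_h$ is irreducible of constant dimension $d$ for all $h$ in the stratum, and similarly I may assume $V_h$ is irreducible of constant dimension. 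Working one stratum at a time, and using that a finite union of constructible sets is constructible, it suffices to prove the claim over each such stratum; so from now on I assume $U \to H$ has irreducible fibers of dimension $d$ and $V \to H$ has irreducible fibers.

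Next I would analyze when $g_h \colon U_h \to V_h$ is birational, given that source and target are irreducible of the same dimension $d$. Birationality of a dominant morphism of irreducible varieties of the same dimension is equivalent to: (i) $g_h$ is dominant, and (ii) the generic fiber of $g_h$ is a single reduced point, i.e. $[K(U_h):K(V_h)] = 1$. For (i): the image of $U_h \to V_h$ is constructible in $V_h$ by Chevalley, and dominance is the condition that this image is dense; by \cite[EGA~IV, 9.5.3]{EGA4} (constructibility of the locus where a morphism is dominant, applied to $U \to V$ over the base $H$, using that the fibers $V_h$ are irreducible) the set of $h$ with $g_h$ dominant is constructible. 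For (ii), once $g_h$ is dominant with $U_h$, $V_h$ irreducible of equal dimension, the map $g_h$ is automatically generically finite, and the degree $\deg(g_h) = [K(U_h):K(V_h)]$ is a constructible function of $h$ — this follows for instance from the fact that the number of points in the fiber over the generic point of $V_h$ is semicontinuous on a suitable stratification, or directly from \cite[EGA~IV, 9.7.9]{EGA4} on the behavior of fiber dimension and degree in families. Hence $\{h : \deg(g_h) = 1\}$ is constructible.

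Putting the pieces together: over each stratum of the original (finite) stratification of $H$, the set of $h$ with $U_h$ irreducible and $U_h \to V_h$ birational is the intersection of finitely many constructible sets, hence constructible; and constructibility is preserved under the finite union over the strata and under taking the preimage back up to $H$ along the locally closed immersions. Therefore the set in question is constructible in $H$, as claimed.

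The main obstacle I anticipate is purely bookkeeping rather than conceptual: one must be careful that ``birational'' is being tested fiber by fiber on the actual fibers $U_h$, $V_h$ (schemes over the residue field $\kappa(h)$), and that constant fiber dimension is arranged before invoking the degree-of-a-generically-finite-morphism argument — without the equidimensionality normalization, a dominant map between fibers of different dimensions can never be birational, which must be excluded cleanly on a stratum. Since $k$ is algebraically closed and all residue fields of closed points are $k$, the distinction between irreducible and geometrically irreducible is harmless at closed points, but to get a genuinely constructible (not just closed-point-wise) statement one should phrase everything via the EGA~IV constructibility theorems over $H$ directly, as above.
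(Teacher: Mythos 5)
Your argument is correct in outline, but it takes a different route from the paper's. The paper's proof is a two-line spreading-out argument: assume $H$ irreducible, observe that the two conditions hold at the generic point of $H$ if and only if they hold over a nonempty open subset, and conclude by Noetherian induction (using the standard criterion that a subset of a Noetherian scheme is constructible iff its trace on every irreducible closed subset either misses a dense open or contains one). You instead assemble the constructible set from off-the-shelf constructibility theorems of EGA~IV, stratifying by geometric irreducibility and fiber dimension and then handling dominance and generic degree separately. Both work; the paper's version is far more economical and avoids having to locate the right citation for each individual condition (your references to EGA~IV~9.5.3 and 9.7.9 are not quite the right numbers, and the ``degree is a constructible function'' step is the least standard of your citations --- though it, too, is most easily proved by exactly the generic-point-plus-Noetherian-induction device the paper uses globally). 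One point where your write-up is arguably more careful than the paper's: you flag the distinction between irreducible and geometrically irreducible fibers. Only the geometric condition is constructible in general (plain irreducibility of $U_h$ can hold at the generic point of $H$ but fail on every nonempty open set), so strictly speaking both your proof and the paper's establish constructibility of the geometric variant; since $k$ is algebraically closed and only closed points of $H$ matter in the application (one takes images of Hilbert-scheme points in $T$), this discrepancy is harmless, as you note.
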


\begin{proof}
Assuming without loss of generality that $H$ is irreducible,
these properties hold at the generic point of $H$ if and only if they hold
over a nonempty open set of $H$. The statement then follows by Noetherian induction.
\end{proof}

\begin{rmk}
An analogous property is satisfied by the locus of unirational 
varieties: the argument easily adjusts to this case by relaxing the 
condition on $U_h \to X_t$ from being birational 
to being dominant. A related result concerning the behavior of uniruledness in families 
is proven in \cite[Therem~IV.1.8]{Kol96}, where it is shown that the locus of 
uniruled varieties in an equidimentional proper family is a countable union of 
closed subsets of the base. 
\end{rmk}

Regarding the general structure of $\SRC(f)$, 
several interesting cases are covered by the following proposition.

\begin{prop}
\label{p:SRC}
Let $f \colon X \to T$ as above. 
\begin{enumerate}
\item
In any setting where embedded resolution of singularities exists, 
$\SRC(f)$ is a constructible subset of $T$. 
\item
If $f$ is smooth, then $\SRC(f)$ is open in $T$.
\item
If $f$ is smooth and $k$ has characteristic zero, then 
$\SRC(f)$ is open and closed in $T$ (and thus is either empty or equal to $T$).
\end{enumerate}
\end{prop}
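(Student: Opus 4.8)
The plan is to treat the three statements in the order (2), (1), (3), since the smooth case is the technical heart and the other two are built on it.

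\emph{Part (2).} Assume $f$ is smooth and fix $t_0\in\SRC(f)$. As $X_{t_0}$ is a smooth proper separably rationally connected variety, by \cite[Theorem~IV.3.7]{Kol96} it carries a very free rational curve, i.e.\ a morphism $g\colon\P^1\to X_{t_0}$ with $g^{*}T_{X_{t_0}}$ ample. I would then study the relative morphism scheme $M:=\Hom_{T}(\P^1\times T,X)$ of maps from $\P^1$ into the fibers of $f$, near the point $[g]$ lying over $t_0$. Since $g^{*}T_{X_{t_0}}$ is ample on $\P^1$ we have $H^1(\P^1,g^{*}T_{X_{t_0}})=0$, so deformation theory gives that $M$ is smooth over $T$ at $[g]$; hence $M\to T$ is smooth, and in particular open, at $[g]$, so every fiber of $f$ over some open neighborhood $T'$ of $t_0$ carries a rational curve deforming $g$. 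The locus in $M$ where the pulled-back relative tangent bundle is ample is open — the splitting type of a bundle on $\P^1$ is upper semicontinuous and ampleness just asks that its least degree be positive — so after shrinking $T'$ every $X_t$ with $t\in T'$ carries a very free curve and is therefore separably rationally connected (again \cite[\S IV.3]{Kol96}). Thus $\SRC(f)$ is open.

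\emph{Part (1).} Now suppose embedded resolution of singularities is available; I would argue by Noetherian induction on $T$. Since the fiber $X_t$ is unchanged if we replace $T$ by one of its irreducible components, we may assume $T$ irreducible. Choose a resolution $\pi\colon\widetilde X\to X$ (of the component of $X$ dominating $T$); by generic smoothness there is a dense open $T^{\circ}\subseteq T$ over which $\widetilde X\to T$ is smooth with integral fibers and $\pi$ restricts over each point to a proper birational morphism $\widetilde X_t\to X_t$ of proper varieties. By Proposition~\ref{p:SRC-bir}, $X_t$ is separably rationally connected if and only if $\widetilde X_t$ is, so $\SRC(f)\cap T^{\circ}$ coincides with the separably rationally connected locus of the smooth family $\widetilde X|_{T^{\circ}}\to T^{\circ}$, which is open by part (2) and hence constructible. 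By Noetherian induction applied to $f$ restricted to the reduced closed subscheme $T\setminus T^{\circ}$, the rest of $\SRC(f)$ is constructible as well, and $\SRC(f)$ is the union of the two.

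\emph{Part (3) and the main obstacle.} Here $f$ is smooth and $\characteristic k=0$; openness is part (2), so it remains to prove that $\SRC(f)$ is closed. Let $t_0\in\overline{\SRC(f)}$. Because resolution is available, $\SRC(f)$ is constructible by part (1), and since it is also open and has $t_0$ in its closure, I can choose an irreducible curve $C\subseteq T$ through $t_0$ that meets $\SRC(f)$ in a dense subset; normalizing $C$ and localizing at the point over $t_0$ yields a smooth proper family $\mathcal X\to\Spec R$ over a discrete valuation ring whose geometric generic fiber is separably rationally connected and whose special fiber is a smooth proper variety birational to $X_{t_0}$. It then suffices to show the special fiber is separably rationally connected; in characteristic zero this is \cite[Theorem~IV.3.11]{Kol96}, and it can also be seen directly: the geometric generic fiber carries a very free curve, which by the properness of the space of stable maps degenerates to a stable map from a tree of rational curves into the special fiber; attaching general free curves through general points one builds a comb with this limit as handle and free teeth, and — the special fiber being smooth — one smooths the comb to obtain a very free curve on it. Hence $X_{t_0}$ is separably rationally connected, so $\SRC(f)$ is closed, and since $T$ is connected it is empty or all of $T$. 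The delicate point of the whole proposition is exactly this last step: the flat limit of a family of rational curves is only a tree of rational curves rather than a single curve, and turning it back into a very free curve on the special fiber is what forces both the smoothness of that fiber and the characteristic-zero hypothesis into the argument.
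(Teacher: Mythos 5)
Your argument for part (a) is essentially the paper's: restrict to an irreducible stratum, resolve singularities, pass to a dense open $T^{\circ}$ over which the resolved family is smooth, identify the two separably rationally connected loci there via Proposition~\ref{p:SRC-bir}, apply part (b), and conclude by Noetherian induction. For parts (b) and (c) the paper simply cites \cite[Theorem~IV.3.11]{Kol96}; you instead re-derive (b) from the deformation theory of very free curves and sketch the specialization argument for (c) before falling back on the same citation. Your proof of (b) is correct and is, in substance, the proof of the cited theorem, so the overall route is the same with the citation unpacked.

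One step needs repair. In part (a) you invoke ``generic smoothness'' to produce $T^{\circ}$. Part (a) is not restricted to characteristic zero --- its only hypothesis is the existence of embedded resolution --- and generic smoothness in the form you use it is a characteristic-zero statement: it fails for inseparable morphisms. This is precisely the point the paper handles explicitly: since the fibers of $f$ are varieties over an algebraically closed field, the generic fiber is geometrically reduced, hence $f$ (and therefore the birationally equivalent resolved family) is separable, and the proof of \cite[Corollary~III.10.7]{Har77} then goes through without any assumption on the characteristic. The needed hypothesis is automatically satisfied here, so your argument is salvageable with one added sentence, but as written the step is unjustified in positive characteristic. A smaller imprecision: in your comb-smoothing sketch for (c), attaching ``free teeth'' to the limit tree presupposes that the special fiber already carries free curves through general points, which itself requires an argument (it is part of the Koll\'ar--Miyaoka--Mori machinery); since you also cite \cite[Theorem~IV.3.11]{Kol96} at that point, nothing is ultimately lost.
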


\begin{proof}
The assertions in~(b) and~(c) are proven in \cite[Theorem~IV.3.11]{Kol96}.
Regarding~(a), first note that $f$ is separable as it has reduced fibers
(cf.~\cite[Theorem~II.8.6A and Proposition~II.8.10]{Har77}) , 
and so is the restriction of $f$ over any locally closed subset of $T$. 
Let $Y \to X$ be a resolution of singularities, and 
consider the composition map $g \colon Y \to T$. 
Since $g$ is separable, there is a non-empty open set $T^\o$ in the regular locus of $T$ 
over which the induced map $g^\o \colon g^{-1}(T^\o) \to T^\o$ is smooth
(the proof of \cite[Corollary~III.10.7]{Har77} goes through without assumptions on the 
characteristic of the ground field as long as one assumes that the morphism is separable).
By~(b), $\SRC(g^\o)$ is an open subset of $T^\o$.
Note on the other hand that $\SRC(f) \cap T^\o = \SRC(g^\o)$ by Proposition~\ref{p:SRC-bir}, 
since every fiber of $g^\o$ is birational to the corresponding 
fiber of $f$.
Thus the assertion follows by Noetherian property, by considering a suitable 
stratification of $T$. 
\end{proof}

\begin{rmk}
A different definition of separably rational connectedness
has been considered in works of de Jong, Graber, and Starr
(cf.~\cite{dJS03,Gra06}), where a projective variety $X$ is
said to be separably rationally connected
if there exists a morphism $g \colon \P^1 \to X_\reg$
such that $g^*T_{X_\reg}$ is ample. 
It is elementary to see that this property is open in families.
It follows by the deformation theory of rational curves 
(see for instance the argument in the proof of \cite[Theorem~IV.3.5]{Kol96})
that a projective variety that is 
separably rationally connected in the sense of de Jong, Graber and Starr 
is also separably rationally connected in the sense defined in the previous section,
and the two notions coincide whenever $X$ is smooth
by \cite[Theorem~IV.3.7]{Kol96}. It is however not clear to us whether 
being separably rationally connected in the sense of de Jong, Graber and Starr
is a birational property among projective varieties. 
In particular, we do not know whether a rational
projective variety is necessarily separably rationally connected in this sense.
\end{rmk}

It is easy to construct examples of families of rational projective varieties degenerating
to singular varieties that are not rational, and vice versa. 
We do not know any example of a (connected) smooth
family of projective varieties containing both rational and nonrational members. 
It is expected in general that one needs to consider countable unions 
in Proposition~\ref{p:loc-closed} and in Question~\ref{conj}. 

\begin{eg}
Complex cubic fourfolds in $\P^5$ form a particularly interesting
class of varieties from the point of view of rationality.  
The quest for rational examples goes back at least to Morin \cite{Mor40}, 
who gave an incorrect argument that would have implied that the general cubic
in $X \subset \P^5$ is rational. 
In the same paper, however, Morin correctly proves the rationality of
general Pfaffian cubic fourfolds: these 
span a codimension one family of smooth rational cubics fourfolds
which was further studied by Fano \cite{Fan44}, 
Tregub \cite{Tre84}, and  Beauville and Donagi \cite{BD85}.
A crucial step in the study of cubic fourfolds is Voisin's proof of
a Torelli Theorem for these varieties \cite{Voi86}. 
More examples of rational cubic fourfolds were found by Zarhin \cite{Zar90}, and
later Hassett \cite{Has99,Has00} constructed a countable series of distinct families
of smooth rational cubic fourfolds: these
are parameterized by divisors on the family of cubics containing a plane, 
which has codimension one in the whole space of cubics. 
It is expected on the other hand that not only the general cubic in $\P^5$, but also
the very general element among those containing a plane is not rational.
An explicit conjecture has been formalized in the language of 
derived categories by Kuznetsov \cite{Kuz10}. Knowing this conjecture would give
an example of a family where the rational locus is, strictly speaking, a countable union
of closed subfamilies. 
\end{eg}

\section{The three dimensional case}

In dimension three, we have the following 
property regarding one-parameter degenerations of rational projective varieties.

\begin{thm}
\label{t:closed}
Let $f\colon X \to T$ be a projective morphism from a variety $X$ onto
a smooth curve $T$ defined over an uncountable algebraically closed field $k$. 
Let $0 \in T$ be a closed point.
Assume that $X_t$ is a rational variety for every $t \ne 0$.
Then every reduced, irreducible component $D$ of $X_0$ that is separably rationally connected
is rational.
\end{thm}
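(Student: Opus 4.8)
The plan is to pass to a resolution and degenerate a sweeping family of rational curves. We are given a projective degeneration $f\colon X\to T$ over a smooth curve with uncountable algebraically closed $k$, general fiber $X_t$ rational, and $D$ a reduced irreducible component of $X_0$ which is separably rationally connected; we want $D$ rational. After base change and normalization we may assume $X$ is normal and $f$ has a section, and by resolving singularities of $X$ (in dimension three, resolution is available in all characteristics by Cossart--Piltant) we may replace $X$ by a smooth model; the strict transform of $D$ is then a smooth projective threefold $\widetilde D$ birational to $D$, still separably rationally connected by Proposition~\ref{p:SRC-bir}, and proving $\widetilde D$ rational suffices.

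The heart of the argument is a degeneration of rational parametrizations. Since $X_t\cong\P^3$ birationally for $t\ne 0$, there is a closed subscheme $Z\subset X\times_T(T\times\P^3)$, flat over a punctured neighborhood of $0$, whose fiber $Z_t$ induces the birational map $X_t\rat\P^3$; equivalently one gets a map from (an open subset of) $X$ over $T\setminus\{0\}$ to $\P^3$, i.e.\ three rational functions. I would then use properness of the relevant component of the relative Hilbert scheme (or, equivalently, the valuative criterion / semistable reduction for the graph) to extend $Z$ across $0$, producing a limit cycle $Z_0$ on $X_0\times\P^3$. The component of $Z_0$ dominating $D$ under the first projection, together with its image behavior under the second projection, should exhibit $D$ as birational to a variety admitting a dominant map to $\P^3$ of some degree $d\ge 1$ — in other words, it expresses $D$ (possibly after normalizing the component) as the domain of a degree-$d$ rational map to $\P^3$. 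The main obstacle is that a priori this limiting map need not be birational: the degree could jump in the limit, so one only gets $D$ unirational, not rational.

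To upgrade unirationality to rationality I would exploit the hypothesis that $D$ is separably rationally connected in an essential way, together with the fact that divisorial valuations are geometric (as announced in the introduction). The key point: the rational map $X\rat\P^3$ over $T^\circ$ gives, for each coordinate, an element of the function field $K(X)$; its order along the divisor $\widetilde D\subset\widetilde X$ is a divisorial valuation on $K(\widetilde D)=K(D)$ (after the usual adjustments, a valuation with a geometric interpretation on a model of $D$). Blowing up $\widetilde D$ along the appropriate center so that the restricted map becomes a morphism, I obtain a smooth projective model $D'$ of $D$ with a morphism $D'\to\P^3$. The plan is to then run a careful analysis — in the spirit of Timmerscheidt's Hodge-theoretic special case but done birationally/arithmetically — showing that on a threefold the degeneration of a birational parametrization of $\P^3$, restricted to an SRC component, must again be birational: intuitively, any extra sheeting would force the presence of a nonzero pluri-form or would contradict the fact that $D$ carries enough free rational curves. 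Concretely I expect to reduce to Castelnuovo-type statements after a further MMP/fibration step: $D'$ is SRC hence rationally connected hence (in dimension three) has $\kappa=-\infty$ and trivial intermediate Jacobian/no pluriforms, and the morphism $D'\to\P^3$ together with the structure of $Z_0$ forces the generic fiber to be a single point. The delicate part, and the one I would spend the most care on, is precisely controlling the limit of the graph: ensuring that $Z_0$ has a component that is still \emph{birational} (not merely dominant) onto both $D$ and $\P^3$, which is where the three-dimensionality and separable rational connectedness of $D$ must both be used and where higher-dimensional analogues are expected to fail.
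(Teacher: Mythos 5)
You have correctly identified the first half of the paper's argument: extend the graph of the fiberwise birational maps across $0$ using properness of the relative Hilbert scheme (after a base change along a curve in the Hilbert scheme), obtaining a birational map $\f\colon X'\rat T'\times\P^3$ of total spaces, and then consider the divisorial valuation $\ord_{D'}$ on $K(X')=K(T'\times\P^3)$. But the second half --- the actual mechanism for concluding that $D$ is rational --- is missing, and the replacement you sketch would not work. First, the component of the limit cycle $Z_0$ lying over $D$ need not dominate $\P^3$ at all: if $D$ is contracted by $\f$, its image in $\{0'\}\times\P^3$ is the center of $\ord_{D'}$, which can be a point, a curve, or a surface, so the limit of the graphs does not even give unirationality of $D$, let alone a generically finite map of some degree $d$. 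Second, and more seriously, the proposed upgrade ``SRC $+$ dominant map to $\P^3$ $+$ no pluriforms $\Rightarrow$ the map is birational'' is false as a general principle: smooth unirational non-rational threefolds exist (Clemens--Griffiths, Iskovskikh--Manin, Artin--Mumford), and they are rationally connected with vanishing pluriforms. Nothing in your sketch isolates a reason why the limiting correspondence should have degree one, and this is precisely the hard point.

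What the paper does instead, and where three-dimensionality enters, is the following. By Zariski's theorem (\cite[Lemma~2.45]{KM98}) the valuation $\ord_{D'}$ is realized as the order of vanishing along a divisor $C_m$ obtained from the \emph{fourfold} $T'\times\P^3$ by finitely many blow-ups of the successive centers $C_0,C_1,\dots$ of the valuation; $C_m$ is birational to $D$. If $m=0$ the center is the whole fiber $\{0'\}\times\P^3$ and $D$ is rational outright. If $m\ge 1$, minimality of $m$ forces $\dim C_{m-1}\le 2$, and over a dense open subset $C_m$ is the projectivized normal bundle of $C_{m-1}$; separable rational connectedness descends from $C_m$ to $C_{m-1}$ along this smooth surjection, so $C_{m-1}$ is a point, a curve, or an SRC surface, hence rational by Castelnuovo, and $C_m$ is birationally a projective bundle over a rational base of dimension at most two, hence rational. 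This ``extracted divisor is a projective bundle over a low-dimensional SRC base'' step is the key idea absent from your proposal, and it is exactly what fails in higher dimension (the base could then be an SRC threefold that is not rational, cf.\ Example~\ref{ex:not-closed}). A further, more minor, issue: the resolution of singularities you invoke is for the fourfold $X$, not a threefold, so Cossart--Piltant does not apply in positive characteristic, and the strict transform of $D$ on a resolution need not be smooth in any case; the paper sidesteps all of this by using only that $X'$ is regular at the generic point of $D'$, which holds because $X'_{0'}$ is a Cartier divisor reduced at that point.
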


\begin{proof}
Using the same notation as in the proof of Proposition~\ref{p:loc-closed}, 
there are countably many irreducible locally closed subsets
\[
S_i \subset H := \Hilb(X \times_T (T \times \P^3)/T), \quad i \in \N,
\] 
such that, if $U$ is the universal family of $H$ then,  
for every $h \in S_i$, $U_h$ gives a birational correspondence between a fiber
$X_t$ of $X$ and $\P^3$, and the rational locus of $f$ is given by the union
\[
\Rat(f) = \bigcup_i T_i
\]
of the images $T_i \subset T$ of the sets $S_i$.
Note that each $T_i$ is an irreducible constructible subset of $T$, and thus
is either a point or an open subset. 

Since we are assuming that $\Rat(f) = T \smallsetminus \{0\}$ and the ground
field is uncountable, there is at least one index $i_0$ such that $T_{i_0}$ is
a dense open subset of $T$.
Let $\ov S_{i_0}$ be the closure of $S_{i_0}$ in $H$.
By the properness of the Hilbert scheme over $T$, $\ov S_{i_0}$ maps onto $T$. 
Let $C \subset \ov S_{i_0}$ be a general complete intersection curve; 
we assume in particular that $C$ is irreducible, that it intersects $S_{i_0}$, 
and that the map $C \to T$ is surjective. Note that $C \cap S_{i_0}$ is open and dense in $C$. 
Let then $T' \to C$ be the normalization, let 
$g \colon T' \to T$ be the composition map, and fix a point $0' \in g^{-1}(0)$. 
The fiber product $X' := T' \times_T X$ is a variety, and
by base change we obtain a projective morphism 
\[
f' \colon X' \to T'
\]
with fiber $X'_{0'} \cong X_0$ over $0'$.

Let $H' := \Hilb(X' \times_{T'} (T' \times \P^3)/T')$, with universal family $U'$. 
By base change, we have a commutative diagram
\[
\xymatrix{
U_{T'} \ar[r] \ar[d] & U' \ar[d] \ar[r] & U \ar[d] \\
T' \ar[r] \ar@{=}[rd] & H' \ar[d]\ar[r] & H \ar[d] \\
& T' \ar[r] & T
},
\]
where
\[
U_{T'} := U \times_H T' = U' \times_{H'} T'
\]
is the pullback of the universal family to $T'$. 
By construction, the image of the induced map
$U_{T'} \to X' \times_{T'} (T' \times \P^3)$ is a closed subscheme 
\[
Z' \subset X' \times_{T'} (T' \times \P^3)
\] 
such that for every $s \in g^{-1}(T_{i_0})$ the fiber $Z'_s$ is irreducible and 
both projections $Z'_s \to X'_s$ and $Z'_s \to \{s\} \times \P^3$ are birational. 
Since $g^{-1}(T_{i_0})$ is an open dense subset of $T'$, it follows that
the support of $Z'$ is the graph of a birational map 
\[
\f \colon X' \rat T' \times \P^3
\]
defined over $T'$. 

Let $D'$ be the irreducible component of $X'_{0'}$ 
mapping to $D$ via the isomorphism $X'_{0'} \cong X_0$. 
Since the fiber $X'_{0'}$ is a Cartier divisor on $X'$
that is reduced at the generic point $\e_{D'}$ of $D'$, 
$\e_{D'}$ is contained in the regular locus of $X'$.
Thus the vanishing order at $\e_{D'}$ defines 
a divisorial valuation on the function field of $X'$. 
Let $\n$ be the induced valuation on the function field
of $T' \times \P^3$. Note that the center $C_0$ of $\n$ in $T' \times \P^3$ is contained in the 
fiber $\{0'\} \times \P^3$. 

Consider the sequence of blow-ups
\[
\cdots\quad \to Y_i \to Y_{i-1} \to \quad\cdots\quad \to Y_1 \to Y_0 := T' \times \P^3
\]
where each $g_i \colon Y_i \to Y_{i-1}$ is the blow-up
of $Y_{i-1}$ along the the center $C_{i-1}$ of $\n$.
Note that, for every $i$, $C_i$ is contained in the exceptional 
divisor of the blow-up $g_i$, and $g_i(C_i) = C_{i-1}$.

By induction on $i$, both $Y_{i-1}$ and $C_{i-1}$ are smooth at the 
generic point of $C_{i-1}$, and therefore  
there is a dense open set $Y^\o_{i-1} \subset Y_{i-1}$, 
contained in the regular locus of $Y_{i-1}$, 
such that $C_{i-1}^\o := C_{i-1} \cap Y^\o_{i-1}$ is smooth and the induced map
$g_i^{-1}(Y^\o_{i-1}) \to Y^\o_{i-1}$
is the blow-up of the normal bundle $\cN_{i-1}$ of $C^\o_{i-1}$ in $Y^\o_{i-1}$. 
In particular, the restriction of the exceptional locus
of $g_i$ over $C^\o_{i-1}$ is isomorphic to the projective 
bundle $\P_{C^\o_{i-1}}(\cN_{i-1})$.

It follows by a theorem of Zariski (cf.~\cite[Lemma~2.45]{KM98}) that
there is an integer $m \ge 0$ such that the 
center $C_m$ of $\n$ has codimension one in $Y_m$ and $\n$ is given by
the order of vanishing at the generic point of $C_m$. 
In particular, $C_m$ is birational to $D'$ since both their function fields
are equal to the residue field of the valuation 
(geometrically, $C_m$ is the proper transform of $D'$ under 
the birational map $X' \rat Y_m$).
We can pick $m$ to be the least integer with these properties. 

If $m = 0$, then the center of $\n$ in $T' \times \P^3$ is the 
whole fiber $\{0'\} \times \P^3$. This means that $\f$
induces a birational map from $D'$ to $\{0'\} \times \P^3$, 
and therefore $D'$ is rational. 

Suppose then that $m \ge 1$. In this case the projection $C_m \to C_{m-1}$ is 
a surjective map from a threefold to a variety of dimension at most two. 
Note that $C_m$ is separably rationally connected, since it
is birational to $X_0$ which is separably rationally connected by hypothesis, 
and being separably rationally connected is a birational property
(see Proposition~\ref{p:SRC-bir}).
Since the map $C_m \to C_{m-1}$ is smooth over $C^\o_{m-1}$, it follows 
that $C_{m-1}$ is separably rationally connected too. 
The assumption on the relative dimension of $f$ implies that
$\dim C_{m-1} \le 2$. If $C_{m-1}$ has dimension at most one then 
it is clearly rational, and the same conclusion holds if $C_{m-1}$ is a surface
by Proposition~\ref{p:SRC-dim2}. 
Note, on the other hand, that 
$C_m$ contains $g^{-1}(C^\o_{m-1})$ as a dense open set, and the latter
is isomorphic to $\P_{C^\o_{m-1}}(\cN_{m-1})$. 
We conclude that $C_m$ is rational. Therefore $D$ is rational.
\end{proof}

\begin{rmk}
\label{r:WFT}
If the ground field $k$ has characteristic zero then 
one can use an alternative argument, based on the
Weak Factorization Theorem
\cite{AKMW02,Wlo03}, to prove Theorem~\ref{t:closed}.
The argument goes as follows. 
Let $\f\colon X' \rat T' \times \P^3$ and $D'$ be as in the proof of the theorem, 
and suppose that $\f$ contracts $D'$
(so that it does not induce directly a birational map from $D'$ to $\{0'\} \times \P^3$).
Let $Y \to X'$ be a resolution of singularities. 
By the Weak Factorization Theorem applied to the
induced birational map $Y \rat T' \times \P^3$, we can find a sequence
of blow-ups $p_i$ and blow-downs $q_j$ with smooth irreducible centers
\[
\xymatrix@C=18pt@R=18pt{
& Z^1 \ar[dl]_{p_1}\ar[dr]^{q_1} && Z^2  \ar[dl]_{p_2}\ar[dr]^{q_2} 
&&&& Z^n \ar[dl]_{p_n}\ar[dr]^{q_n} & \\
Y=Y^0 && Y^1 && Y^2 & \dots & Y^{n-1} && Y^n = T'\times\P^3
}
\]
(we allow isomorphisms among the maps $p_i$ and $q_j$).
Since $\f$ contracts $D'$, there is a model $Z^i$,
for some $1 \le i \le n$, where the proper transform $D^i$ of $D'$
is the exceptional divisor of $q_i \colon Z^i \to Y^i$. 
Since $D^i$ is rationally connected, so is its image $W_i := q_i(D^i)$, 
which is therefore rational.
This implies that $D^i$ is rational, as it is isomorphic to the
projectivization of the normal bundle of $W_i$ in $Y^i$.
Therefore $D$ is rational.
\end{rmk}

\begin{rmk}
\label{r:Tim}
When the ground field is $k = \C$ and the family $f\colon X \to T$ is smooth, 
Theorem~\ref{t:closed} also follows by \cite[Theorem~1]{Tim}.
\end{rmk}

\begin{proof}[Proof of Theorem~\ref{t:main}]
The statement of the theorem is trivial if the ground field $k$ if finite or countable, 
since in this case any subset of $T$
can be expressed as a countable union of closed subsets. 
Thus we can assume that $k$ is uncountable. 

By Proposition~\ref{p:loc-closed}, $\Rat(f)$ is a countable 
union of locally closed subsets of $R_i \subset T$.
Suppose that $\Rat(f)$ cannot be written as a countable 
union of closed subsets of $\SRC(f)$. 
Then we can find a point $p \in \SRC(f) \smallsetminus \Rat(f)$
that belongs to the closure $\ov R_i$ of $R_i$ in $T$ for some $i$. Let
$S \subset \ov R_i$ be a curve passing through $p$ 
and with generic point in $R_i$.
Let $\~S \to S$ be the normalization of $S$ and fix a point $0 \in \~S$
in the pre-image of $p$. Let then $\~T \subset \~S$ be an open neighborhood of $0$
such that $\~T \smallsetminus \{0\}$ maps into $R_i$. 
By taking the base change 
\[
\~f \colon \~X := X \times_T \~T \to \~T,
\]
we reduce to the setting of Theorem~\ref{t:closed}, 
which implies that $\~X_0$ is rational. Since $\~X_0 \cong X_p$, 
this contradicts the fact that $p \not\in\Rat(f)$.
\end{proof}

\begin{proof}[Proof of Corollary~\ref{c:conj3}]
In the hypothesis of Question~\ref{conj}, 
assume that $f$ has relative dimension 3. 
Suppose that $\Rat(f) \ne \emptyset$. Then $\SRC(f)$ is non-empty, 
and thus it is equal to $T$ by Proposition~\ref{p:SRC}.
Therefore the corollary reduces to a special case of Theorem~\ref{t:closed}.
\end{proof}

\begin{eg}
\label{ex:not-closed}
Consider the projection $\P^1 \times \P^n \to \P^1$.
Fix a point $0 \in \P^1$, and let $W$ be a smooth hypersurface
of degree $n$ in the fiber $\{0\} \times \P^n$. 
By \cite{dF}, $W$ is nonrational if $n \ge 4$ and the ground field has characteristic zero. 
Although $W$ might be {\it stably rational} (which would mean that $W \times \P^m$
is rational for some $m$), it is quite possible that $W \times \P^1$ is 
nonrational. In fact, it is conceivable (and possibly expected) that $W$ is
not even unirational if it is general, 
and this would certainly imply that $W \times \P^1$ is not rational. 
Now, let $\cL = \O_{\P^1}(2) \boxtimes \O_{\P^n}(n)$ and 
$\I_W$ be the ideal sheaf of $W$ in $\P^1 \times \P^n$. The sheaf
$\cL \otimes \I_W$ is globally generated, and thus
the linear system $|\cL \otimes \I_W|$ defines a rational map 
\[
\ff \colon \P^1 \times \P^n \rat X \subset \P H^0(\cL \otimes \I_W)
\]
which is resolved by the blow-up $Y = \Bl_W(\P^1 \times \P^n)$ of $\I_W$. 
Here $X$ denotes the closure of the image of the map.
The map $\ff$ is defined over $\P^1$, and thus
there is a morphism $f \colon X \to \P^1$. 
Furthermore, $\ff$ induced an isomorphism away from the fibers over $0$, 
so that $X_t \cong \P^n$ for $t \ne 0$. 
On the other hand the fiber $X_0$ is birational to $W \times \P^1$. 
Indeed, the induced morphism $\ff' \colon Y \to X$ contracts the proper transform 
of $\{0\} \times \P^n$ to a point and maps the exceptional
divisor of the blow-up birationally to the fiber $X_0$, which is thus
isomorphic to the projective cone in $\P^{n+1}$ over $W$. 
In particular, $X_0$ is not rational if $W \times \P^1$
is not rational; note however that $X_0$ is always separably rationally connected
(for every $W$ in characteristic zero, and for general $W$ in positive characteristics
by \cite{Zhu}).
This example suggests that the analogous statement of Theorem~\ref{t:closed} 
in higher dimensions may be false, possibly starting from dimension four.
\end{eg}



\begin{bibdiv}
\begin{biblist}

\bib{AKMW02}{article}{
   author={Abramovich, Dan},
   author={Karu, Kalle},
   author={Matsuki, Kenji},
   author={W{\l}odarczyk, Jaros{\l}aw},
   title={Torification and factorization of birational maps},
   journal={J. Amer. Math. Soc.},
   volume={15},
   date={2002},
   number={3},
   pages={531--572},
}

\bib{BD85}{article}{
   author={Beauville, Arnaud},
   author={Donagi, Ron},
   title={La vari\'et\'e des droites d'une hypersurface cubique de dimension
   $4$},
   language={French, with English summary},
   journal={C. R. Acad. Sci. Paris S\'er. I Math.},
   volume={301},
   date={1985},
   number={14},
   pages={703--706},
}

\bib{dF}{article}{
   author={de Fernex, Tommaso},
   title={Birationally rigid hypersurfaces},
   note={Preprint 2011, available as {\tt arXiv: math/0604213}}, 
}

\bib{dJS03}{article}{
   author={de Jong, A. J.},
   author={Starr, J.},
   title={Every rationally connected variety over the function field of a
   curve has a rational point},
   journal={Amer. J. Math.},
   volume={125},
   date={2003},
   number={3},
   pages={567--580},
}

\bib{Fan44}{article}{
   author={Fano, Gino},
   title={Alcune questioni sulla forma cubica dello spazio a cinque
   dimensioni},
   language={Italian},
   journal={Comment. Math. Helv.},
   volume={16},
   date={1944},
   pages={274--283},
}

\bib{Gra06}{article}{
   author={Graber, Tom},
   title={Rational curves and rational points},
   conference={
      title={International Congress of Mathematicians. Vol. II},
   },
   book={
      publisher={Eur. Math. Soc., Z\"urich},
   },
   date={2006},
   pages={603--611},
}

\bib{Har77}{book}{
   author={Hartshorne, Robin},
   title={Algebraic geometry},
   note={Graduate Texts in Mathematics, No. 52},
   publisher={Springer-Verlag},
   place={New York},
   date={1977},
}

\bib{Has99}{article}{
   author={Hassett, Brendan},
   title={Some rational cubic fourfolds},
   journal={J. Algebraic Geom.},
   volume={8},
   date={1999},
   number={1},
   pages={103--114},
}

\bib{Has00}{article}{
   author={Hassett, Brendan},
   title={Special cubic fourfolds},
   journal={Compositio Math.},
   volume={120},
   date={2000},
   number={1},
   pages={1--23},
}


\bib{Kol96}{book}{
   author={Koll{\'a}r, J{\'a}nos},
   title={Rational curves on algebraic varieties},
   series={Ergebnisse der Mathematik und ihrer Grenzgebiete. 3. Folge. A
   Series of Modern Surveys in Mathematics [Results in Mathematics and
   Related Areas. 3rd Series. A Series of Modern Surveys in Mathematics]},
   volume={32},
   publisher={Springer-Verlag},
   place={Berlin},
   date={1996},
}

\bib{KM98}{book}{
   author={Koll{\'a}r, J{\'a}nos},
   author={Mori, Shigefumi},
   title={Birational geometry of algebraic varieties},
   series={Cambridge Tracts in Mathematics},
   volume={134},
   note={With the collaboration of C. H. Clemens and A. Corti;
   Translated from the 1998 Japanese original},
   publisher={Cambridge University Press},
   place={Cambridge},
   date={1998},
}

\bib{Kuz10}{article}{
   author={Kuznetsov, Alexander},
   title={Derived categories of cubic fourfolds},
   conference={
      title={Cohomological and geometric approaches to rationality problems},
   },
   book={
      series={Progr. Math.},
      volume={282},
      publisher={Birkh\"auser Boston Inc.},
      place={Boston, MA},
   },
   date={2010},
   pages={219--243},
}

\bib{Lip78}{article}{
   author={Lipman, Joseph},
   title={Desingularization of two-dimensional schemes},
   journal={Ann. Math. (2)},
   volume={107},
   date={1978},
   number={1},
   pages={151--207},
}

\bib{Mor40}{article}{
   author={Morin, Ugo},
   title={Sulla razionalit\`a dell'ipersuperficie cubica generale dello
   spazio lineare $S_5$},
   language={Italian},
   journal={Rend. Sem. Mat. Univ. Padova},
   volume={11},
   date={1940},
   pages={108--112},
}

\bib{Tim}{article}{
   author={Timmerscheidt, Klaus},
   title={On deformations of three-dimensional rational manifolds},
   journal={Math. Ann.},
   volume={258},
   date={1981/82},
   number={3},
   pages={267--275},
}

\bib{Tre84}{article}{
   author={Tregub, S. L.},
   title={Three constructions of rationality of a cubic fourfold},
   language={Russian},
   journal={Vestnik Moskov. Univ. Ser. I Mat. Mekh.},
   date={1984},
   number={3},
   pages={8--14},
}

\bib{Voi86}{article}{
   author={Voisin, Claire},
   title={Th\'eor\`eme de Torelli pour les cubiques de ${\bf P}^5$},
   language={French},
   journal={Invent. Math.},
   volume={86},
   date={1986},
   number={3},
   pages={577--601},
}

\bib{Wlo03}{article}{
   author={W{\l}odarczyk, Jaros{\l}aw},
   title={Toroidal varieties and the weak factorization theorem},
   journal={Invent. Math.},
   volume={154},
   date={2003},
   number={2},
   pages={223--331},
}

\bib{Zar90}{article}{
   author={Zarhin, Yuri G.},
   title={Algebraic cycles over cubic fourfolds},
   language={English, with Italian summary},
   journal={Boll. Un. Mat. Ital. B (7)},
   volume={4},
   date={1990},
   number={4},
   pages={833--847},
}

\bib{Zhu}{article}{
   author={Zhu, Yi},
   title={Fano Hypersurfaces in Positive Characteristic},
   note={Preprint 2011, available as {\tt arXiv: 1111.2964}}, 
}

\end{biblist}
\end{bibdiv}

\setlength{\parindent}{0in}
\def\scshape{}

\end{document}